%
%
%


\documentclass[11pt]{amsart}

\usepackage{amssymb}

\usepackage{graphicx}



\newtheorem{theorem}{Theorem}
\newtheorem{lemma}[theorem]{Lemma}

\newtheorem{proposition}[theorem]{Proposition}

\theoremstyle{definition}

\theoremstyle{remark}



\newcommand{\A}{{\mathcal A}}

\newcommand{\C}{\mathbb{C}}
\renewcommand{\H}{\mathcal{H}}

\renewcommand{\span}{\mathrm{span}}

\newcommand{\dx}{\partial / \partial x}

\def\<{\left<}
\def\>{\right>}

\def\<{\left<}
\def\>{\right>}
\def\ll{\langle\kern-3pt\langle}
\def\rr{\rangle\kern-3pt\rangle}

\begin{document}


\title[Two counterexamples for power ideals of arrangements]{Two counterexamples for power ideals of hyperplane arrangements.}


\author{Federico Ardila}
\address{Department of Mathematics, San Francisco State University, 1600 Holloway Ave, San Francisco, CA 94110, USA. }
\curraddr{}
\email{federico@sfsu.edu}
\thanks{Supported in part by NSF Award DMS-0801075 and CAREER Award DMS-0956178.}

\author{Alexander Postnikov}
\address{Department of Mathematics, Massachusetts Institute of Technology, 77 Massachusetts Ave, Cambridge, MA 02139, USA.}
\curraddr{}
\email{apost@math.mit.edu}
\thanks{Supported in part by NSF CAREER Award DMS-0504629.}


\date{}

\dedicatory{}

\begin{abstract}
We disprove Holtz and Ron's conjecture that the power ideal $C_{\A,-2}$ of a hyperplane arrangement $\A$ (also called the  internal zonotopal space) is generated by $\A$-monomials. We also show that, in contrast with the case $k \geq -2$, the Hilbert series of $C_{\A,k}$ is not determined by the matroid of $\A$ for $k \leq -6$.
\end{abstract}

\maketitle

\noindent \textbf{Remark.} This note is a corrigendum to our article \cite{AP}, and we follow the notation of that paper.

\bigskip

\section{Introduction.}

Let $\A = \{H_1, \ldots, H_n\}$ be a hyperplane arrangement in a vector space $V$; say $H_i = \{x \mid l_i(x)=0\}$ for some linear functions $l_i \in V^*$. Call a product of (possibly repeated) $l_i$s an \emph{$\A$-monomial} in the symmetric algebra $\mathbb{C}[V^*]$.
Let $\mathrm{Lines}(\A)$ be the set of lines of intersection of the hyperplanes in $\A$.
For each $h \in V$ with $h \neq 0$, let $\rho_{\A}(h)$ be the number of 
hyperplanes in $\A$ 
not containing $h$. Let $\rho = \rho(\A) = \textrm{min}_{h \in V}(\rho_\A(h))$. For all integers $k \geq - (\rho+1)$, consider the \emph{power ideals}:
\[
I_{\A,k} := \left<h^{\rho_\A(h)+k+1} \mid h \in V, h \ne 0\right>, \quad
I'_{\A,k} := \left<h^{\rho_\A(h)+k+1} \mid h \in \mathrm{Lines}(\A)\right>
\]
in the symmetric algebra $\C[V]$. It is convenient to regard the polynomials in $I_{\A,k}$ as differential operators, and to consider the space of solutions to the resulting system of differential equations:
\[
C_{\A,k} = I_{\A,k}^\perp := \left\{f(x) \in \C[V^*] \mid h\left(\frac{\partial}{\partial x}\right)^{\rho_\A(h)+k+1} f(x) = 0 \textrm{ for all } h \ne 0 \right\}
\]
which is known as the \emph{inverse system} of $I_{\A,k}$. Define $C'_{\A,k}$ similarly.
These objects arise naturally in numerical analysis, algebra, geometry, and combinatorics. 
For references, see \cite{AP, HR}. 

One important question is to compute the Hilbert series of these spaces of polynomials, graded by degree, 
as a function of combinatorial invariants of $\A$. Frequently, the answer is expressed in terms of the Tutte polynomial of $\A$. This has been done successfully in many cases. One strategy used 
independently by different authors has been to prove the following: 

\begin{enumerate}
\item[(i)]  There is a spanning set of $\A$-monomials for $C_{\A,k}$.
\item[(ii)] There is an exact sequence $0 \rightarrow C_{\A \backslash H, k}(-1)
\rightarrow C_{\A,k}
\rightarrow C_{\A / H,k}
\rightarrow 0$
of graded vector spaces.
\item[(iii)] Therefore, the Hilbert series  of $C_{\A,k}$ is an evaluation of the Tutte polynomial of $\A$.
\end{enumerate}
Here $\A \backslash H$ and $\A / H$ are the deletion and contraction of $H$, respectively.

For $k \geq -1$, this method works very nicely. Dahmen and Michelli \cite{DM} were the first ones to do this  for $C'_{\A,-1}$. Postnikov-Shapiro-Shapiro \cite{PSS} did it for $C_{\A,0}$, while Holtz and Ron \cite{HR} did it for $C'_{\A,0}$. In \cite{AP} we did it for $C_{\A,k}$ for all $k \geq -1$, and showed that $C'_{\A,0} = C_{\A,0}$ and $C'_{\A,-1} = C_{\A,-1}$.

For $k \leq -3$ this approach does not work in full generality. In \cite{AP} we showed that (i) is false in general for $C_{\A,k}$, and left (ii) and (iii) open, suggesting the problem of measuring $C_{\A,k}$.
For $k \leq -6$, (ii) and (iii) are false, as we will show in Propositions \ref{prop2} and \ref{prop3}, respectively. In fact, we will see that the Hilbert series of $C_{\A,k}$ is not even determined by the matroid of $\A$.

The intermediate cases are interesting and subtle, and deserve further study; notably the case $k=-2$, which Holtz and Ron call the \emph{internal zonotopal space}. In  \cite{HR} they proved (ii) and (iii) and conjectured (i) for $C'_{\A,-2}$. In \cite[Proposition 4.5.3]{AP} -- a restatement of Holtz and Ron's Conjecture 6.1 in \cite{HR} -- we put forward an incorrect proof of this conjecture; the last sentence of our argument is false. In fact their conjecture is false, as we will see in Proposition \ref{prop}.

\section{The case $k=-2$: internal zonotopal spaces.}

Before showing why Holtz and Ron's conjecture is false, let us point out that the remaining statements about $C_{\A, -2}$ that we made in \cite{AP} are true. The easiest way to derive them is to prove that $C_{\A,-2} = C'_{\A,-2}$, and simply note that Holtz and Ron already proved those statements for $C'_{\A,-2}$:

\begin{lemma}
We have $C_{\A, k} = C'_{\A, k}$ for any $k$ with $- (\rho+1) \leq k \leq 0$.
\end{lemma}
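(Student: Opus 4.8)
The plan is to reduce the Lemma to a single‑degree vanishing statement for $C'$ and prove that by induction on dimension. One inclusion is automatic: since $\mathrm{Lines}(\A)\subseteq V$, every generator of $I'_{\A,k}$ is a generator of $I_{\A,k}$, so $I'_{\A,k}\subseteq I_{\A,k}$ and $C_{\A,k}\subseteq C'_{\A,k}$. By Macaulay duality, $C_{\A,k}=C'_{\A,k}$ is equivalent to $I_{\A,k}=I'_{\A,k}$, so it suffices to prove $I_{\A,k}\subseteq I'_{\A,k}$, that is, $h^{\rho_\A(h)+k+1}\in I'_{\A,k}$ for all $h\ne 0$. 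The cases $k\in\{-1,0\}$ are already in \cite{AP}, so assume $k\le-2$; then $\rho(\A)\ge-(k+1)\ge 1$, so $\A$ is essential. Fix $h$, let $F$ be the smallest flat of $\A$ containing $h$, and let $\A^F$ be the arrangement induced on $F$ by the hyperplanes of $\A$ not containing $F$ (so $\A^H=\A/H$ when $H$ is a hyperplane). I would first record the routine facts that $\rho_{\A^F}(p)=\rho_\A(p)$ for every $p\in F$ --- hence $|\A^F|=\rho_\A(F)=\rho_\A(h)$, $\rho_{\A^F}(\ell)=\rho_\A(\ell)$ for lines $\ell\subseteq F$, and $\rho(\A^F)\ge\rho(\A)$ --- that the lines of $\A^F$ are precisely the lines of $\A$ contained in $F$, and that $\A^F$ is again essential. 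Since the generators of $I'_{\A^F,k}$ are then generators of $I'_{\A,k}$ and $h\in F$, the problem reduces to the following claim: \emph{for every essential arrangement $\B$ on a space $U$ with $-(\rho(\B)+1)\le k\le-2$, one has $\mathfrak m^N\subseteq I'_{\B,k}$, where $N:=|\B|+k+1$} --- equivalently, $(C'_{\B,k})_j=0$ for all $j\ge N$.

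I would prove this by induction on $\dim U$. If $\dim U=1$, then $\B$ is $|\B|$ copies of the origin, $\mathrm{Lines}(\B)=\{U\}$, and $I'_{\B,k}=\langle v^N\rangle=\mathfrak m^N$ for a spanning vector $v$. For $\dim U\ge 2$, take $\phi\in(C'_{\B,k})_N$; the heart of the matter is to show that $\phi$ is divisible by $\prod_H l_H^{\,d_H}$, the product over the distinct hyperplanes $H$ of $\B$, where $l_H$ is a defining form and $d_H$ the multiplicity of $H$. Fix $H$. One shows $l_H^{\,j}\mid\phi$ for $j=1,\dots,d_H$ by an inner induction on $j$: assuming $l_H^{\,j-1}\mid\phi$, write $\phi=l_H^{\,j-1}\psi$; for every line $\ell\subseteq H$ of $\B$ the operator $\ell(\dx)$ annihilates $l_H$, so by the Leibniz rule $\ell(\dx)^{\rho_\B(\ell)+k+1}\psi=0$; restricting to $H$ and using $\rho_\B(\ell)=\rho_{\B^H}(\ell)$ and that $\B^H$ is essential on $H$, the homogeneous form $\psi|_H$ of degree $N-(j-1)$ lies in $C'_{\B^H,k}$, and since $N-(j-1)\ge N-d_H=|\B^H|+k+1$ the outer induction hypothesis (applicable because $\dim H<\dim U$ and $\rho(\B^H)\ge\rho(\B)$, which also keeps all exponents nonnegative) forces $\psi|_H=0$; thus $l_H\mid\psi$ and $l_H^{\,j}\mid\phi$. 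Running this over all distinct $H$, $\phi$ is divisible by a form of degree $\sum_H d_H=|\B|$. But $\deg\phi=N=|\B|+k+1<|\B|$ because $k\le-2$, so $\phi=0$. Hence $(C'_{\B,k})_N=0$, and then $(I'_{\B,k})_j=\mathrm{Sym}^j U$ for all $j\ge N$, completing the induction.

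The step I expect to be most delicate is the bookkeeping for how $\rho$ transforms under restriction to flats and to hyperplanes --- in particular the identity $\rho_\B(\ell)=\rho_{\B^H}(\ell)$ for lines $\ell\subseteq H$, the inheritance $\rho(\B^H)\ge\rho(\B)$ (which keeps $k$ in the admissible range and all exponents nonnegative), and tracking multiplicities so that $d_H$ descent steps really are available for a repeated hyperplane $H$. The geometric facts used (the lines of a restriction are the lines of the original arrangement lying in the flat; restrictions of essential arrangements are essential) are elementary. One could instead carry the full range $k\le 0$ through, at the cost of pushing the inner induction one step further to obtain $l_H^{\,d_H+1}\mid\phi$, whose degree $|\B|+p$ (with $p\ge\dim U\ge 2$ the number of distinct hyperplanes) still exceeds $N$ when $k\le 0$.
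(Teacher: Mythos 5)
Your argument is correct, but it is a genuinely different and much heavier route than the paper's. The paper piggybacks on the known case $k=0$ (where $I_{\A,0}=I'_{\A,0}$ is \cite[Theorem 4.17]{AP}) and runs a three-line downward induction on $k$: if $h^{\rho_\A(h)+j+1}=\sum f_i h_i^{\rho_\A(h_i)+j+1}$, applying one directional derivative along a vector not annihilating $h$ drops every exponent by one, giving $h^{\rho_\A(h)+j}\in I'_{\A,j-1}$. You instead prove the stronger local statement that for the restriction $\B=\A^F$ to the smallest flat $F\ni h$ one has $\mathfrak{m}_F^{\,|\B|+k+1}\subseteq I'_{\B,k}$, by induction on $\dim F$ via a divisibility argument ($\phi$ must be divisible by $\prod_H l_H^{d_H}$, which exceeds its degree). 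What your approach buys is independence from the deletion--contraction machinery behind \cite[Theorem 4.17]{AP} (your closing remark that pushing the inner induction to $l_H^{d_H+1}\mid\phi$ recovers $k\in\{-1,0\}$ is correct, since an essential arrangement in dimension $\ge 2$ has at least two distinct hyperplanes), plus the sharper degree-by-degree vanishing $(C'_{\A^F,k})_{\ge |\A^F|+k+1}=0$; the cost is all the bookkeeping about how $\rho$, essentiality, and $\mathrm{Lines}$ behave under restriction, which you do carry out correctly. One point to nail down: your base case asserts $\mathrm{Lines}(\B)=\{U\}$ when $\dim U=1$, which requires the convention that the empty intersection of hyperplanes is a flat; under the paper's literal definition a one-dimensional arrangement has no lines and $I'_{\B,k}=0$. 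This is harmless because the base case is only ever invoked through the step $\dim U=2$, where the hyperplane $H$ being restricted to is itself a line of $\B$, so the condition $D_v^{\rho_\B(v)+k+1}\phi=0$ ($v$ spanning $H$, $\rho_\B(v)=|\B|-d_H$) is already among the defining equations of $C'_{\B,k}$ and your Leibniz step closes the argument directly --- but you should say so rather than rely on the degenerate one-dimensional case.
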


\begin{proof}
By \cite[Theorem 4.17]{AP} we have $I_{\A, 0} = I'_{\A, 0}$, so it suffices to show that  $I_{\A, j} = I'_{\A, j}$ implies  that $I_{\A, j-1} = I'_{\A, j-1}$ as long as these ideals are defined. If $I_{\A, j} = I'_{\A, j}$, then for any $h \in V \backslash \{0\}$ we have $h^{\rho_{\A}(h)+j+1} = \sum f_i h_i^{\rho_{\A}(h_i)+j+1}$ for some polynomials $f_i$, where the $h_i$s are the lines of the arrangement. As long as the exponents are positive, taking partial derivatives in the direction of $h$ gives  $h^{\rho_{\A}(h)+j} = \sum g_i h_i^{\rho_{\A}(h_i)+j}$ for some polynomials $g_i$.
\end{proof}

%
%
%
The following result shows that (i) does not hold for $C_{\A,-2}$.

\begin{proposition} \label{prop} \cite[Conjecture 6.1]{HR} \textnormal{is false}: The ``internal zonotopal space" $C_{\A,-2}$ is not necessarily spanned by $\A$-monomials.
\end{proposition}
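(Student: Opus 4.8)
The plan is to produce an explicit hyperplane arrangement $\A$ for which $C_{\A,-2}$ provably fails to be spanned by $\A$-monomials, and to verify the failure by a dimension count. First I would search among small arrangements — realized by integer vectors in a vector space of dimension $3$ or $4$, with around six to eight hyperplanes — since the case $k=-2$ requires $\rho(\A) \geq 1$ and the phenomenon is invisible for arrangements whose matroids are too simple (e.g. uniform or graphic of small rank, where the Holtz–Ron machinery and the results of \cite{AP} for $k \geq -1$ leave little room). A natural candidate family to try is arrangements coming from a non-graphic or nearly-non-graphic matroid, or a deformation of a braid arrangement, where the interplay between the combinatorics and the linear algebra of the $l_i$ is rich enough to obstruct monomial spanning.

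The key computation is as follows. On one hand, I would compute $\dim C_{\A,-2}$ in each degree; by the Lemma above $C_{\A,-2} = C'_{\A,-2}$, and Holtz and Ron proved in \cite{HR} that the Hilbert series of $C'_{\A,-2}$ is the expected Tutte-polynomial evaluation, so this side is purely a matroid computation. On the other hand, I would compute the dimension of the span $M_{\A}$ of all $\A$-monomials lying in $C_{\A,-2}$: an $\A$-monomial $\prod l_i^{a_i}$ lies in $C_{\A,-2}$ iff $h(\partial/\partial x)^{\rho_\A(h)-1}$ annihilates it for every $h$, and — following the combinatorial description of such conditions used in \cite{AP} — membership of an $\A$-monomial in the power ideal's inverse system is governed by a purely combinatorial (matroid-theoretic) criterion on the exponent vector $\a=(a_i)$. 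So I can enumerate the admissible exponent vectors in each degree, list the corresponding monomials, and compute the rank of the resulting matrix of coefficients (in a monomial basis of $\C[V^*]$). If in some degree $d$ this rank is strictly smaller than $\dim C_{\A,-2}$ in degree $d$, then the $\A$-monomials cannot span $C_{\A,-2}$, and the proposition is proved.

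In fact I expect to be able to short-circuit the rank computation: the cleanest argument is to produce an explicit element $f \in C_{\A,-2}$ of low degree together with an explicit nonzero linear functional $\phi$ on $\C[V^*]_d$ that vanishes on every $\A$-monomial in $C_{\A,-2}$ of degree $d$ but has $\phi(f) \neq 0$. Constructing $\phi$ amounts to finding a linear relation satisfied by the (finitely many) admissible $\A$-monomials that is not forced on all of $C_{\A,-2}$; such a relation typically exists because the $\A$-monomials of a fixed degree are far from linearly independent once $n > \dim V$, while $C_{\A,-2}$ "sees" more directions. I would present the final arrangement, the degree $d$, the monomial list, and the witness $f$ and $\phi$, and check the three assertions $f\in C_{\A,-2}$, $\phi(\text{each admissible }\A\text{-monomial})=0$, and $\phi(f)\neq 0$ by direct calculation.

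The main obstacle is the search itself: certifying that a given small arrangement works requires a somewhat delicate combinatorial characterization of which $\A$-monomials lie in $C_{\A,-2}$ (so that one does not accidentally omit some monomial and get a false negative), and then a careful — but finite and mechanical — linear-algebra check. There is also the risk that the smallest counterexample is large enough that the monomial enumeration is unpleasant to do by hand; mitigating this, I would lean on the matroid-theoretic membership criterion to prune the list aggressively, and if necessary present the verification as a finite computation whose data (the arrangement, the monomials, the witnesses) is displayed in full so the reader can check it directly.
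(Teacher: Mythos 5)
There is a genuine gap: your write-up is a search strategy, not a proof. The entire content of the proposition is the existence of a specific arrangement witnessing the failure, and you never exhibit one — you only describe how you would look for it and how you would certify it once found. The certification scheme itself (compare $\dim (C_{\A,-2})_d$ against the rank of the admissible $\A$-monomials in degree $d$, or produce a witness $f$ and a functional $\phi$) is sound, but without a concrete arrangement, a concrete degree, and the actual numbers, nothing has been proved. Moreover, your plan is heavier than necessary in two respects: you propose to get $\dim C_{\A,-2}$ from the Tutte-polynomial formula of Holtz--Ron for $C'_{\A,-2}$, and to control the monomial side via a combinatorial membership criterion from \cite{AP}; for a single small example one can instead just compute the ideal $I_{\A,-2}$ and its inverse system directly, and check by inspection which $\A$-monomials lie in it.

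For comparison, the paper's proof does exactly this with the six hyperplanes $y_1, y_2, y_3, y_1-y_4, y_2-y_4, y_3-y_4$ in $\C^4$ (a unimodular, in fact graphic, arrangement — so well inside the ``candidate family'' you gesture at). A short computation gives $I_{\H,-2} = \langle x_1, x_2, x_3, x_4^2\rangle$, hence $C_{\H,-2} = \span(1, y_4)$; since every $\H$-monomial of degree $1$ is one of the six listed linear forms and none of these lies in $\span(1,y_4)$, the degree-$1$ part of $C_{\H,-2}$ contains no $\H$-monomial at all, and the rank comparison is $0 < 1$. This is precisely the dimension count you propose, but it only becomes a proof once the example is on the table. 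To repair your argument you must actually carry out the search, name the arrangement, and display the computation.
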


\begin{proof}
Let $\H$ be the hyperplane arrangement in $\mathbb{C}^4$ determined by the linear forms 
$y_1, y_2, y_3, y_1-y_4, y_2-y_4, y_3-y_4$. We have
\[
I'_{\H, -2} = \langle x_1^1, x_2^1, x_3^1, (\epsilon_1 x_1 + \epsilon_2 x_2 +\epsilon_3 x_4 +x_4)^2\rangle =  \langle x_1, x_2, x_3, x_4^2 \rangle
\]
as $\epsilon_1, \epsilon_2, \epsilon_3$ range over $\{0,1\}$. The other generators of $I_{\H,-2}$ are of degree at least 3, and are therefore in $I'_{\H,-2}$ already, so 
\[
I_{\H, -2} = \langle x_1, x_2, x_3, x_4^2 \rangle, \qquad C_{\H, -2} = \span(1, y_4).
\]
Therefore $C_{\H, -2}$ is not spanned by $\H$-monomials.
\end{proof}

As Holtz and Ron pointed out, if \cite[Conjecture 6.1]{HR} had been true, it would have implied \cite[Conjecture 1.8]{HR}, an interesting spline-theoretic interpretation of $C_{\A,-2}$ when $\A$ is unimodular. The arrangement above is unimodular, but it does not provide a counterexample to \cite[Conjecture 1.8]{HR}. In fact, Matthias Lenz \cite{Lenz} has recently put forward a proof of this weaker conjecture.

\section{The case $k \leq -6$}

In this section we show that when $k \leq -6$, the Hilbert series of $C_{\A,k}$ is not a function of the Tutte polynomial of $\A$. In fact, it is not even determined by the matroid of $\A$. 
Recall that $\rho=\rho(\A) := \min_{h \in V}(\rho_{\A}(h))$. Say $h \in V$ is \emph{large} if it is on the maximum number of hyperplanes, so $\rho_{\A}(h) = \rho$. 

\begin{lemma}
The degree $1$ component of $C_{\A, -\rho}$ is 
\[
(C_{\A, -\rho})_1 = (\span\{h\in V \, : \, h \textrm{ is large}\})^\perp
\]
in $V^*$.
\end{lemma}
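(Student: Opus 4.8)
The plan is to observe that the degree-$1$ condition trivializes all but finitely many of the differential equations defining $C_{\A,-\rho}$. First I would unwind the definition: since the defining operators are homogeneous, $C_{\A,-\rho}$ is a graded subspace of $\C[V^*]$, so $(C_{\A,-\rho})_1 = \{f \in V^* : h(\partial/\partial x)^{\rho_\A(h)-\rho+1} f = 0 \text{ for all } h \in V \setminus \{0\}\}$.

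Next I would split the nonzero $h$'s according to whether they are large. If $h$ is not large then $\rho_\A(h) > \rho$, so the exponent $\rho_\A(h)-\rho+1$ is at least $2$; as $f$ is homogeneous of degree $1$, the operator $h(\partial/\partial x)^{\rho_\A(h)-\rho+1}$ annihilates $f$ regardless of the choice of $f$. Hence such $h$ impose no constraint, and the only genuine conditions come from the large vectors $h$, for which $\rho_\A(h) = \rho$ and the exponent equals $1$. (Note this also covers the degenerate possibility $\rho = 0$, where the exponent for a large $h$ is still $1$.)

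Finally, for a large $h$ the surviving condition $h(\partial/\partial x)\, f = 0$ says, under the apolarity pairing of $\C[V]$ with $\C[V^*]$, exactly that $\langle h, f\rangle = 0$, i.e. $f$ annihilates $h$ under the natural pairing $V \times V^* \to \C$. Therefore $f \in (C_{\A,-\rho})_1$ if and only if $\langle h, f \rangle = 0$ for every large $h$, which is precisely $f \in (\span\{h \in V : h \text{ is large}\})^\perp$. I do not expect any real obstacle here: the only points requiring care are the inequality $\rho_\A(h)-\rho+1 \geq 2$ for non-large $h$ (which is where the minimality of $\rho$ enters) and the standard identification of the first-order differential operator $h(\partial/\partial x)$ acting on a linear form with evaluation of the pairing $\langle h, -\rangle$.
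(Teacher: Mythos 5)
Your proposal is correct and follows essentially the same route as the paper's own proof: for a linear $f$ the condition is vacuous unless $h$ is large (since the exponent $\rho_\A(h)-\rho+1$ is then at least $2$), and for large $h$ the first-order condition $h(\partial/\partial x)f=0$ is exactly $f\perp h$. Your version just spells out the two steps the paper states in one sentence each.
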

\begin{proof}
An element $f$ of $C_{\A, -\rho}$ needs to satisfy the differential equation $
h\left(\dx\right)^{\rho_\A(h)-\rho+1}f(x) = 0$ for all non-zero $h \in V$. If $f$ is linear, this condition is trivial unless $h$ is large; and in that case it says that $f \perp h$.
\end{proof}

\begin{proposition}\label{prop2}
For $k \leq -6$, the Hilbert series of $C_{\A,k}$ is not determined by the matroid of $\A$.
\end{proposition}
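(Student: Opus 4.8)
The plan is to exhibit two hyperplane arrangements $\A_1$ and $\A_2$ that realize the same matroid $M$ but whose spaces $C_{\A_1,k}$ and $C_{\A_2,k}$ have different Hilbert series for all $k\le -6$. The natural source of such examples is a matroid that has two essentially different representations over $\C$ — the standard candidate being a matroid built so that in one representation some collection of the $l_i$ satisfies an ``extra'' linear dependence (a syzygy among monomials) that fails in the other representation. Concretely, I would look for a small matroid (rank $3$ or $4$, on around $7$–$9$ elements) such as the non-Fano/Fano pair or a ``$V_8$-type'' configuration, where in one arrangement three coplanar triples force an identity like $l_1 l_2 l_3 = l_4 l_5 l_6$ (up to scalars) among the $\A$-monomials, and in the generic realization no such identity holds.

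The key computational step is to use the degree-$1$ description from the preceding Lemma only as a warm-up and instead track a higher-degree graded piece of $C_{\A,k}$, since for $k\le -6$ the interesting discrepancy appears in degree around $\rho + k + $ (something small). First I would fix an $h$ that is ``large'' in both arrangements so that the matroid controls $\rho$ and $\rho_\A(h)$ identically; then the defining differential equations $h(\partial/\partial x)^{\rho_\A(h)+k+1}f = 0$ have the same \emph{exponents} in both cases, so any difference in $C_{\A,k}$ must come from the actual linear-algebraic relations among the forms $l_i$, not from the combinatorics. I would then compute the relevant graded dimension $\dim (C_{\A,k})_d$ for the chosen $d$ in each arrangement — most cleanly by computing $\dim (I_{\A,k})_d$ in $\C[V]$ as the span of the polynomials $h^{\rho_\A(h)+k+1}$ truncated to degree $d$, and taking the orthogonal complement. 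The extra monomial syzygy in one realization will make that span smaller (hence $C_{\A,k}$ larger) in exactly one of the two cases.

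The main obstacle, and where most of the real work lies, is choosing the arrangement so that (a) the two realizations genuinely have the same matroid — this must be checked carefully, e.g. by listing all circuits/bases — and (b) the ``extra dependence'' among the $l_i$ actually propagates into a dimension difference in $C_{\A,k}$ rather than being washed out by the many other generators $h^{\rho_\A(h)+k+1}$ that are common to both arrangements. The condition $k\le -6$ is presumably forced precisely because one needs the exponents $\rho_\A(h)+k+1$ to be small enough (for enough $h$) that the generators of $I_{\A,k}$ in the critical degree $d$ are themselves products/powers of the $l_i$ of low degree, so that a single polynomial identity among $\A$-monomials changes the rank; for $k\ge -5$ the exponents are too large and the generators are ``generic enough'' that the matroid alone determines the rank. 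So I would calibrate $d$ and the arrangement size to land exactly in the window where one nontrivial syzygy among the $l_i$ is visible.

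Having set this up, the proof is completed by a direct (possibly computer-assisted) dimension count: write down $\A_1$ and $\A_2$ explicitly with rational coordinates, verify they carry the same matroid, and compute $\dim(C_{\A_1,k})_d \ne \dim(C_{\A_2,k})_d$ for the chosen $d$ and one explicit value $k=-6$; then a short argument (taking cones over $\A_i$, or adding a coloop, which shifts all relevant exponents uniformly) extends the inequality to all $k\le -6$. Since the Tutte polynomial is a matroid invariant, this simultaneously shows the Hilbert series is not a Tutte-polynomial evaluation, giving Proposition~\ref{prop2} and, with the exact-sequence failure, Proposition~\ref{prop3}.
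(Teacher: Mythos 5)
Your overall strategy --- exhibit two arrangements with the same matroid and different Hilbert series --- is of course the only possible one, but as written the proposal is a search plan rather than a proof, and the plan has concrete problems. First, your candidate examples do not work: the Fano and non-Fano configurations are \emph{different} matroids (so they cannot serve as two representations of one matroid), and the Fano matroid is not even representable over $\C$. Second, the mechanism you propose --- a multiplicative identity such as $l_1l_2l_3=l_4l_5l_6$ among the defining forms, detected in some higher graded piece of $I_{\A,k}$ --- is never connected to the actual generators of $I_{\A,k}$, which are powers $h^{\rho_\A(h)+k+1}$ of vectors $h\in V$, not products of the forms $l_i\in V^*$. You give no argument that such a syzygy would change $\dim (I_{\A,k})_d$ for any $d$, and you defer the decisive step to an unperformed, possibly computer-assisted computation. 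A proof has to actually produce the two arrangements and the dimension count.

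You also dismiss exactly the tool that makes this easy. The preceding Lemma is not a warm-up: the discrepancy already appears in degree $1$. The non-matroidal data one should exploit is not a syzygy among the $l_i$ but the \emph{linear span of the large vectors}, i.e.\ of the points lying on the maximum number of hyperplanes. The paper takes three lines $L_1,L_2,L_3$ through the origin in $\C^3$ and $m$ generic planes through each; then $\rho=2m$, the large vectors are exactly those on $L_1\cup L_2\cup L_3$, and $(C_{\A,-2m})_1=(\span\{L_1,L_2,L_3\})^\perp$ has dimension $1$ or $0$ according to whether the three lines are coplanar --- a property invisible to the matroid. This also explains the threshold $k\le -6$ correctly: one needs $m\ge 3$ so that a generic intersection of two planes (which lies on only $2$ hyperplanes) is not itself large, and hence $k=-\rho=-2m\le -6$; your heuristic about the exponents being ``too large'' for $k\ge -5$ is not the operative reason. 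The odd case is then handled by adding one generic hyperplane. I would redo the argument along these lines rather than trying to calibrate a higher-degree computation.
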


\begin{proof}
First assume $k=-2m$. Let $L_1, L_2, L_3$ be three lines through $0$ in $\mathbb{C}^3$ and consider an arrangement $\A$ of $3m$ (hyper)planes consisting of $m$ generically chosen planes $H_{i1}, \ldots, H_{im}$  passing through $L_i$ for $i=1,2,3$. Then $\rho = 2m$ and the only large lines are $L_1, L_2$, and $L_3$. Therefore $\dim (C_{\A, -2m})_1$ equals $1$ if $L_1, L_2, L_3$ are coplanar, and $0$ otherwise. However, the matroid of $\A$ does not know whether $L_1, L_2, L_3$ are coplanar. 

More precisely,  consider two versions $\A_1$ and $\A_2$ of the above construction; in $\A_1$ the lines $L_1, L_2, L_3$ are coplanar, and in $\A_2$ they are not. Then $\A_1$ and $\A_2$ have the same matroid but $\dim (C_{\A_1, -2m})_1 \neq \dim (C_{\A_2, -2m})_1$.

The case $k=-2m-1$ is similar. It suffices to add a generic plane to the previous arrangements.
\end{proof}

\begin{proposition}\label{prop3}
For $k \leq -6$, the sequence of graded vector spaces 
\[
0 \rightarrow C_{\A \backslash H, k}(-1) \rightarrow C_{\A,k} \rightarrow C_{\A / H, k} \rightarrow 0
\]
of \cite[Proposition 4.4.1]{AP} is not necessarily exact, even if $H$ is neither a loop nor a coloop.
\end{proposition}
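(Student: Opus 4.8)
The plan is to exploit the same two arrangements $\A_1$ and $\A_2$ used in the proof of Proposition~\ref{prop2}, where the failure of matroid-invariance is already localized in the degree~$1$ component. The key observation is that the exact sequence in (ii), if it held for all arrangements, would force the Hilbert series of $C_{\A,k}$ to be a deletion-contraction invariant, hence an evaluation of the Tutte polynomial, hence determined by the matroid --- contradicting Proposition~\ref{prop2}. So I would argue by contradiction: if the sequence were exact for every $H$ that is not a loop or coloop in every arrangement of the relevant form, then one could peel off hyperplanes one at a time and compute $\mathrm{Hilb}(C_{\A,k})$ recursively from smaller arrangements, with the base cases being arrangements that are direct sums of near-pencils (for which the Hilbert series is manifestly matroid-determined). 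This recursion would express $\mathrm{Hilb}(C_{\A,k})$ as a universal function of the matroid, contradicting the fact that $\A_1$ and $\A_2$ have the same matroid but different degree~$1$ dimensions.

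Concretely, I would take $k = -2m \le -6$ (so $m \ge 3$) and the arrangement $\A = \A_2$ from Proposition~\ref{prop2}: three families of $m$ generic planes through three non-coplanar lines $L_1, L_2, L_3$ in $\C^3$. Pick $H = H_{1m}$, one of the planes through $L_1$; it is neither a loop nor a coloop since $m \ge 2$. Now I compute the three terms of the sequence in degree~$1$ (or whatever low degree exhibits the discrepancy), using Lemma~2 (the degree~$1$ description of $(C_{\A,-\rho})_1$ as the orthogonal complement of the span of the large lines) together with the observation that $\rho(\A \backslash H)$, $\rho(\A/H)$, and $\rho(\A)$ are related in a controlled way: deleting one plane from a family of $m \ge 3$ leaves $\rho$ unchanged and leaves $L_1, L_2, L_3$ as the large lines, while contracting $H$ drops the ambient dimension and changes the combinatorics of the large flats. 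The point is to check that $\dim (C_{\A,k})_1 \ne \dim(C_{\A\backslash H,k}(-1))_1 + \dim(C_{\A/H,k})_1$, which directly shows the sequence is not exact.

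The main obstacle will be getting the contraction $\A/H$ and its value of $\rho$ exactly right: contracting the hyperplane $H$ restricts $\A$ to the line $H$ (a one-dimensional arrangement of points in $\C^2$ after quotienting), and I need to verify that $\rho(\A/H)$ is still large enough that $k = -2m \le -(\rho(\A/H)+1)$, so that $C_{\A/H,k}$ is even defined and the sequence makes sense --- otherwise the proposition is vacuous for that choice of $H$. If the naive choice of $H$ makes $C_{\A/H,k}$ undefined, I would instead add a few generic planes to $\A_1$ and $\A_2$ first (as in the $k = -2m-1$ case of Proposition~\ref{prop2}) to boost $\rho$ without disturbing the coplanarity discrepancy among $L_1, L_2, L_3$, and then choose $H$ to be one of those generic planes. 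A cleaner alternative, which I would present if the bookkeeping gets heavy, is the purely formal argument of the first paragraph: exactness of the sequence plus the fact that it terminates at matroid-determined base cases would make $\mathrm{Hilb}(C_{\A,k})$ a matroid invariant, contradicting Proposition~\ref{prop2}; this needs no explicit low-degree computation at all, only the remark that the recursion's base cases (arrangements with no $H$ that is neither loop nor coloop, i.e.\ Boolean arrangements) have matroid-determined Hilbert series.
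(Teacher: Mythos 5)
Your concrete plan (second paragraph) is essentially the paper's proof: take $\A=\A_2$ and $H$ one of the planes through $L_1$, and compare degree-$1$ dimensions. The paper's version is even a bit leaner than what you propose: since $L_1,L_2,L_3$ are not coplanar, $\dim(C_{\A,-2m})_1=0$, while in the contraction the planes $H_{12},\dots,H_{1m}$ all become the single line $L_1\subset H$ and the other $2m$ planes become generic lines, so $\rho(\A/H)=2m$, the lemma applies with $k=-\rho(\A/H)$, and $(C_{\A/H,-2m})_1=L_1^\perp$ is one-dimensional; a map $0\to\C$ cannot be surjective, so right exactness already fails and the deletion term never needs to be examined. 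Your worry about whether $C_{\A/H,k}$ is defined is resolved by exactly this computation. Do note one factual slip: deleting one plane through $L_1$ does \emph{not} leave $\rho$ unchanged --- it drops $\rho(\A\backslash H)$ to $2m-1$ (the large lines become $L_2,L_3$), so $k=-2m=-(\rho(\A\backslash H)+1)$ and $C_{\A\backslash H,k}=0$; this happens not to hurt your dimension count, but the claim as written is wrong.

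The ``cleaner alternative'' in your first and last paragraphs has a genuine gap and should not be presented as a fallback. The implication ``exactness for all non-loop, non-coloop $H$ $\Rightarrow$ $\mathrm{Hilb}(C_{\A,k})$ is matroid-determined'' is not free: the deletion--contraction recursion only makes sense while $k\geq -(\rho+1)$ holds for every minor encountered, and $\rho$ drops rapidly under deletion and contraction, so for $k\leq -6$ the recursion leaves the domain of definition long before reaching Boolean base cases (for which, moreover, $\rho=0$ when the arrangement is not full rank in its ambient space, so $C_{\A,k}$ is undefined there too). You noticed exactly this definedness issue for a single contraction step but did not account for it in the global recursion, where it is fatal. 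Stick with the explicit degree-$1$ computation.
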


\begin{proof}
We will not need to recall the maps that define this sequence; we will simply show an example where right exactness is impossible because $\dim (C_{\A,k})_1 = 0 $ and $\dim (C_{\A / H, k})_1 = 1$. We do this in the case $k=-2m$; the other one is similar. 

Consider the arrangement $\A=\A_2$ of the proof of Proposition \ref{prop2} and the plane $H=H_{11}$. We have $\dim (C_{\A, -2m})_1=0$. 
In the contraction $\A/H$, the planes $H_{12}, \ldots, H_{1m}$ become the same line $L_1$ in $H$, while the other $2m$ planes of $\A$  become generic lines in $H$. Therefore $\rho(\A \backslash H) = 2m$ and $(C_{\A / H, -2m})_1=L_1^\perp$ in $H^*$, which is one-dimensional.
%
\end{proof}

\smallskip

\noindent \textbf{Acknowledgments.} We are very thankful to Matthias Lenz for pointing out the error in \cite{AP}, and to Andrew Berget and Amos Ron for their comments on a preliminary version of this note.

\bibliographystyle{amsplain}

\end{document}